\theoremstyle{plain}
\newtheorem{thm}{Theorem}[section]
\newtheorem{lem}[thm]{Lemma}
\newtheorem{prop}[thm]{Proposition}
\newtheorem{cor}[thm]{Corollary}
\theoremstyle{definition}
\newtheorem{defn}[thm]{Definition}
\newtheorem{example}[thm]{Example}
\theoremstyle{remark}
\numberwithin{equation}{section}
\newcommand{\lset}{\left\{}
\newcommand{\rset}{\right\}}
\newcommand{\reals}{\mathbb{R}}
\newcommand{\inv}{{\mathstrut -1}}
\renewcommand{\phi}{\varphi}
\newcommand{\lam}{\lambda}
\newcommand{\tilU}{\widetilde{U}}
\newcommand{\tilf}{\tilde{f}}
\newcommand{\scrO}{\mathcal{O}}
\DeclareMathOperator{\C}{C}
\newcommand{\funs}[2]{\C^{\infty}(#1;#2)}
\newcommand{\rfuns}[1]{\C^{\infty}(#1)}
\newcommand{\sects}[2]{\Gamma(#1;#2)}
\newcommand{\gsects}[1]{\Gamma(#1)}
\newcommand{\lsects}[1]{\Gamma_{\text{\textup{loc}}}(#1)}
\DeclareMathOperator{\spn}{span}
\DeclareMathOperator{\dom}{dom}
\DeclareMathOperator{\Span}{Span}
\DeclareMathOperator{\Ann}{Ker}
\DeclareMathOperator{\BC}{BC}
\DeclareMathOperator{\fdim}{rk}
\DeclareMathOperator{\Hom}{Hom}
\DeclareMathOperator{\im}{im}
\DeclareMathOperator{\supp}{supp}
\DeclareMathOperator{\maxdim}{maxdim}
\newcommand{\terminology}[1]{\textbf{#1}}
\newcommand{\bspan}{\Span}
\newcommand{\bann}{\Ann}
\newcommand{\annsym}{\perp}
\newcommand{\fm}{\mathcal{F}}
\newcommand{\tilfm}{\widetilde{\fm}}
\newcommand{\tbundle}[2]{\Theta^{#2}(#1)}
\newcommand{\tilG}{\widetilde{G}}
\newcommand{\ratballs}{\mathcal{B}}
\newcommand{\dimset}[1]{\Sigma_{#1}}
\DeclarePairedDelimiter\abs{\lvert}{\rvert}
\let\enorm\abs
\DeclarePairedDelimiter\norm{\lVert}{\rVert}
\newcommand{\bcsp}[2]{\BC^{\infty}(#1;#2)}
\newcommand{\bcspi}[2]{\bcsp{\reals^{#1}}{\reals^{#2}}}
\newcommand{\bcspnm}{\bcspi{n}{m}}
\newcommand{\idl}[1]{\mathcal{I}(#1)}
\begin{document}

\title
{Smooth Distributions are Finitely Generated}

\author{Lance D. Drager}
\address{Department of Mathematics and Statistics\\
Texas Tech University\\
Lubbock, TX \ 79409-1042}
\email{lance.drager@ttu.edu}

\author{Jeffrey M. Lee}
\address{Department of Mathematics and Statistics\\
Texas Tech University\\
Lubbock, TX \ 79409-1042}
\email{jeffrey.lee@ttu.edu}

\author{Efton Park}
\address{Department of Mathematics\\
Texas Christian University\\
Box 298900\\
Fort Worth, TX 76129}
\email{e.park@tcu.edu}

\author{Ken Richardson}
\address{Department of Mathematics\\
Texas Christian University\\
Box 298900\\
Fort Worth, TX 76129}
\email{k.richardson@tcu.edu}
\keywords{distributions, foliations}
\subjclass[2000]{57R15, 57R30, 57R25}

\begin{abstract}
A subbundle
of variable dimension inside the tangent bundle of a
smooth manifold
is called a smooth distribution if it is 
the pointwise span of a family of smooth vector fields.
We prove that all such distributions are finitely generated,
meaning that the family may be taken to be a finite collection.
Further, we show that the space of
smooth sections of such distributions need not be finitely 
generated as a module over the smooth functions.
Our results are valid in greater generality, where the tangent
bundle may be replaced by an arbitrary vector bundle.
\end{abstract}

\maketitle

\section{Introduction}\label{s:intro}

Let $M$ be a smooth manifold, and let $L$ be a distribution on $M$, 
i.e. a subbundle of the tangent bundle $TM$.
The well-known Frobenius theorem
states that $L$ determines a foliation of $M$ if and only if
$L$ is involutive.  
Recall that the hypotheses of the Frobenius theorem
require that the dimension of the subspace $L_{p}$ is a
constant function of $p\in M$.

In many fields, for example control theory and Poisson geometry, 
 one encounters \terminology{generalized distributions}, where
the subspace $L_{p}$ can have different dimensions at different
points. We call a distribution of constant rank
a \terminology{regular} distribution.
Sussmann \cite{MR0321133} and Stefan \cite{MR577729}
extended the Frobenius theorem to generalized distributions
(see Michor \cite{MR2428390}*{Chapter~1, Section~3} for a nice exposition).
Sussmann and Stefan considered a distribution $L$ to be smooth if 
for each $p\in M$, there are locally defined vector fields that are 
sections of $L$
whose values at $p$ span $L_p$. 

On the other hand, in the theory of exterior differential systems one encounters
generalized
distributions that are defined as the kernels of families of
one-forms.  We call such a distribution \terminology{cosmooth}.  
The subspaces of the cotangent spaces spanned by these one-forms
determine a generalized subbundle of the cotangent bundle.
This motivates studying generalized subbundles of arbitrary vector
bundles.

If $E$ is a vector bundle over $M$, a generalized subbundle $G$
of $E$ is an assignment $p\mapsto G_{p}$ of a subspace $G_{p}$ of the fiber
$E_{p}$ of $E$ over $p$, for each point $p\in M$.  The
interesting cases are where $G$ is smooth or cosmooth.  A section $s$
of $E$ is said to be a \textbf{section of} $G$ if
$s(p)\in G_{p}$ for all $p\in M$.

The book \cite{MR2099139} by Bullo and Lewis has an interesting discussion of generalized
distributions and generalized subbundles; their book was an
inspiration for this paper.

If $F$ is a smooth $k$\nobreakdash{-}dimensional subbundle of $E$ in
the usual sense, then every point has a neighborhood $U$ on which there
are $k$ smooth sections $s_{1}, s_{2},\dots,s_{k}$ whose values form a basis
for $F_{p}$ at every point $p\in U$.  Every smooth section of $F$ over
$U$ can be written as a combination of the sections
$s_{1},\dots,s_{k}$ with smooth coefficients.  To put it in other words,
the set of smooth sections of $F$ over $U$ is a module over the ring of
smooth functions on $U$; this module is finitely generated with
generators $s_{1},\dots,s_{k}$.

In the case of a generalized subbundle $G$, we can find sections of
$G$ on an open
set $U$ that form a basis at each point only if the dimension of $G$ is
constant on $U$.  We can generalize what happens in the regular case
in two directions.

First, following the terminology in Bullo and Lewis \cite{MR2099139}, we say that a
generalized subbundle $G$ of $E$ is \terminology{finitely generated} over 
an open set $U$ if there are smooth sections $s_{1}, s_{2},\dots
s_{k}$ of $G$ over $U$ so that the values $s_{1}(p), s_{2}(p),\dots,
s_{k}(p)$ \emph{span} $G_{p}$ for each $p\in U$.  We say that
$s_{1},\dots,s_{k}$ are generators for $G$ over $U$.  One can
ask if such generators always exist, either locally or globally.

Second, we can consider the set of sections of $G$ over $U$ as a
module over the ring of smooth functions on $U$ and ask if there are
sections $s_{1},\dots, s_{k}$ as above that form a finite set of generators
for this module.  If the $s_{1},\dots, s_{k}$ generate the module,
Bullo and Lewis \cite{MR2099139} call them \terminology{nondegenerate} generators for $G$ over
$U$.  One can ask if the module of sections is finitely generated,
either globally, i.e, when $U=M$, or locally, i.e., for
some neighborhood $U$ of each point.

Bullo and Lewis \cite{MR2099139}
have a discussion that shows that every point
of a real analytic
generalized subbundle has a neighborhood $U$ on which
there are nondegenerate generators.  This follows from the fact that
the ring of convergent power series is Noetherian.  

In this paper, we study these questions in the smooth case.
For the first question, we show that every generalized subbundle of
a vector bundle is \terminology{globally finitely generated}, that is,
there are finitely many globally defined sections whose values span
the generalized subbundle at each point.   Other researchers
have conjectured that this result is not true, even locally (see, for example, Bullo and Lewis
\cite{MR2099139}*{p.~125}).

We obtain a negative answer to the second question.  We give an example
which shows that the module of sections of a generalized subbundle
(even a tangent distribution) need not be finitely generated, even
locally.

\section{Precise Formulations}

Let $M$ be a manifold.  We will assume that all of our manifolds 
and maps are smooth.

If $V$ is a vector space, we denote the space
of smooth functions $M\to V$ by $\funs{M}{V}$.  In case $V=\reals$,
we write this space as $\rfuns{M}$.


The space of smooth sections of a vector bundle $E$
over an open set $U$ is denoted $\sects{U}{E}$, and the space of smooth
globally defined sections of $E$ is denoted $\gsects{E}$.  A \terminology{local
  section} of $E$ is a smooth section of $E$ defined on some open set.
We denote by $\lsects{E}$ the set of local sections of $E$. Thus
$\lsects{E}$ is the union of the spaces $\sects{U}{E}$ as $U$ ranges
over all open sets.  If $s\in \lsects{E}$, we denote the domain
of $s$ by $\dom(s)$.

A \terminology{generalized
subbundle} $G$ of $E$ is an assignment of a
subspace $G_{p}\subseteq E_{p}$ for each point $p\in M$.  
We do not assume the subspaces $G_p$ vary continuously with $p$ 
or have constant dimension.

If $U\subseteq M$, we say a local
section $s$ of $E$ over $U$ \terminology{belongs to $G$} or \terminology{is a
  section of $G$} if $s(p)\in G_{p}$ for all $p\in U$.  The set of
sections of $E$ over $U$ that belong to $G$ is denoted by
$\sects{U}{G}$.  The set of local sections of $E$ that belong to $G$
on their domains is denoted $\lsects{G}$.

Given a generalized subbundle $G$, there need not be any nonzero
smooth sections of $G$. The following condition on $G$ ensures 
a supply of sections of $G$.

\begin{defn}
  A generalized subbundle $G$ of a vector bundle $E$ is \terminology{smooth}
  if for every point $p$ we can find a family of sections
  $s_{1},\dots,s_{k} \in \lsects{G}$ which contain $p$ in the
  intersection of their domains such that
  \begin{equation*}
    G_{p}= \spn\lset s_{1}(p),\dots,s_{k}(p)\rset.
  \end{equation*}

\end{defn}

The next proposition gives an equivalent definition of
smoothness; the elementary proof is omitted.

\begin{prop}\label{thm:1}
  A generalized subbundle of $G$ of a vector bundle $E$ is smooth if
  and only if for every point $p$ and every $v\in G_{p}$ there is some
section $s\in \lsects{G}$ such that $s(p)=v$.
\end{prop}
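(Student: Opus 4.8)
The plan is to prove both directions of the equivalence in Proposition~\ref{thm:1}, the easy one essentially by definition and the other by a partition-of-unity argument.

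For the reverse implication, suppose that for every point $p$ and every $v\in G_{p}$ there is a section $s\in\lsects{G}$ with $s(p)=v$. Fixing $p$, choose a basis $v_{1},\dots,v_{k}$ of $G_{p}$ and pick sections $s_{1},\dots,s_{k}\in\lsects{G}$ with $s_{i}(p)=v_{i}$. These sections have $p$ in the intersection of their domains, and their values at $p$ span $G_{p}$, so $G$ is smooth in the sense of the Definition. This direction needs no analysis at all.

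The forward implication is where the work lies. Assume $G$ is smooth, fix $p\in M$ and $v\in G_{p}$. By smoothness there are sections $s_{1},\dots,s_{k}\in\lsects{G}$ defined on a common neighborhood $U$ of $p$ with $G_{p}=\spn\{s_{1}(p),\dots,s_{k}(p)\}$. Hence $v=\sum_{i} c_{i}s_{i}(p)$ for scalars $c_{i}\in\reals$, and the constant-coefficient combination $t:=\sum_{i}c_{i}s_{i}\in\sects{U}{G}$ is a section of $G$ with $t(p)=v$. The only remaining issue is that $t$ is merely a \emph{local} section; to land in $\lsects{G}$ this is already enough since $\lsects{G}$ consists of sections defined on \emph{some} open set, so in fact $t$ itself works and no globalization is needed. (If one instead wanted a globally defined section, one would multiply $t$ by a bump function $\chi\in\rfuns{M}$ equal to $1$ near $p$ and supported in $U$, extend by zero, and note that $\chi t$ still takes values in $G$ pointwise because each $G_{q}$ is a linear subspace; but Proposition~\ref{thm:1} as stated only asks for a local section.)

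The main obstacle—if there is one—is purely bookkeeping: making sure the finitely many sections provided by smoothness can be taken over a \emph{common} domain containing $p$, which is immediate by intersecting their domains, and observing that taking a real-linear combination of sections of $G$ again yields a section of $G$ because $G_{q}$ is a subspace for each $q$. No Noetherian-type input or delicate construction is required here; the substance of the paper lies in the global finite generation theorem to come, not in this elementary equivalence.
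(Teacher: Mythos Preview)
Your proof is correct. The paper itself omits the proof entirely, remarking only that it is elementary, so there is nothing to compare against; your argument is the natural one. (Your opening sentence promises a partition-of-unity argument, but you correctly observe later that none is needed, since $\lsects{G}$ asks only for a local section.)
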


\begin{defn}
If $\fm \subseteq \lsects{E}$, 
we define a generalized subbundle  $\bspan(\fm)$ of $E$ by
\begin{equation*}
  \bspan(\fm)_{p} = \spn \lset s(p) : s\in \fm,\  p \in \dom(s)\rset.
\end{equation*}
\end{defn}

We follow the convention that if $V$ is a vector space, the
span of $\emptyset \subseteq V$ is $\lset 0 \rset\subseteq V$.
Thus, if $p$ is not in the domain of any element of $\fm$, then
$\bspan(\fm)_{p}=\lset 0\rset$.

For any family $\fm\subseteq \lsects{E}$, the subbundle $\bspan(\fm)$ is smooth by
definition.  Note that a generalized subbundle $G$ is smooth if and only
if
$G = \bspan(\lsects{G})$.

The following observation will be important. Let $G$ be smooth.
For any point $p$, we can find sections $s_{1}\dots s_{k}$ of $G$ defined on
some open neighborhood $U$ of $p$ such that $s_{1}(p),\dots,s_{k}(p)$
form a basis of $G_{p}$.  These sections
will remain linearly independent on some open neighborhood $V\subseteq
U$ of $p$.   Because these sections belong to $G$, at a point $q\in V$
other than $p$, the set $\left\{ s_{1}(q),\dots, s_{k}(q)\right\}$ is a linearly independent
set in $G_{q}$.  Thus
$\dim(G_{q})\geq k=\dim(G_{p})$.   
This implies that $\dim(p)=\dim(G_{p})$ is 
a lower semicontinuous function of $p\in M$.

We say that $p\in M$ is a \terminology{regular point} of $G$ if $\dim$
is constant on a neighborhood of $p$.  The set of regular points is
open and dense, as is well known.

\begin{defn}
  If $\fm$ is a family of local sections of the dual bundle
  $E^{\ast}$, we define a generalized subbundle $\bann(\fm)$ of $E$ by
\[
\bann(\fm)_{p}=\{v\in E_{p}:s(p)(v)=0 \text{ for all }s\in \fm\text{ with }p\in\dom(s)\}.
  \]
\end{defn}
Note that we can always add the globally defined 
zero section of $E^{\ast}$ to $\fm$ without changing $\bann(\fm)$. 

\begin{defn}
A generalized subbundle $G$ of $E$ is \textbf{cosmooth} if
$G=\bann(\fm)$ for some family $\fm\subseteq\lsects{E^{\ast}}$.
\end{defn}

\begin{defn}
  If $F$ is a generalized subbundle of $E^{\ast}$, we define a
  generalized subbundle $F^{\annsym}$ of $E$
 by
  \begin{equation*}
    F^{\annsym}_{p} = \lset v\in E_{p}: \lam(v)= 0\text{ for all
      $\lam\in F_{p}$}\rset.
  \end{equation*}

\end{defn}
Observe that
$
\bann (\fm)=(\bspan(\fm))^{\annsym}
$.

\begin{thm}
  A generalized subbundle $G$ of $E$ is cosmooth if and only if
$G=F^{\annsym}$ for some smooth generalized subbundle $F$ of $E^{\ast}$.
\end{thm}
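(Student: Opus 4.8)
The plan is to prove the two implications separately, noting that the nontrivial content is in showing that a cosmooth subbundle can be realized as $F^{\annsym}$ for a \emph{smooth} $F$.

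First I would dispatch the easy direction. Suppose $G = F^{\annsym}$ for some smooth generalized subbundle $F$ of $E^{\ast}$. Since $F$ is smooth, $F = \bspan(\lsects{F})$, so taking $\fm = \lsects{F}$ we have $\bspan(\fm) = F$, and then the observation recorded just before the theorem, $\bann(\fm) = (\bspan(\fm))^{\annsym}$, gives $\bann(\fm) = F^{\annsym} = G$. Hence $G$ is cosmooth.

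For the converse, suppose $G$ is cosmooth, so $G = \bann(\fm)$ for some family $\fm \subseteq \lsects{E^{\ast}}$. The naive choice $F = \bspan(\fm)$ already satisfies $F^{\annsym} = \bann(\fm) = G$ by the same observation, and $\bspan$ of any family of local sections is smooth by definition. So in fact $F := \bspan(\fm)$ is a smooth generalized subbundle of $E^{\ast}$ with $F^{\annsym} = G$, which is exactly what we need. The point worth spelling out is just that $\bspan(\fm)$ is smooth — but this is immediate from the remark in the text that ``for any family $\fm \subseteq \lsects{E}$, the subbundle $\bspan(\fm)$ is smooth by definition,'' applied here with $E$ replaced by $E^{\ast}$.

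In truth there is no serious obstacle here: the statement is essentially a bookkeeping consequence of the definitions of $\bann$, $\bspan$, $\annsym$, and smoothness, combined with the identity $\bann(\fm) = (\bspan(\fm))^{\annsym}$. The only thing to be careful about is the domain conventions — a section $s \in \fm$ only imposes the condition $s(p)(v) = 0$ at points $p \in \dom(s)$, and $\bspan(\fm)_p$ only sees those same sections — so that $\bspan(\fm)_p^{\annsym}$ and $\bann(\fm)_p$ genuinely agree fiber by fiber, including at points lying outside every domain (where both are all of $E_p$, since the span of the empty set is $\{0\}$ and $\{0\}^{\annsym} = E_p$). I would therefore present the proof as: ($\Leftarrow$) take $\fm = \lsects{F}$ and invoke the observation; ($\Rightarrow$) take $F = \bspan(\fm)$, note it is smooth by construction, and invoke the observation again.
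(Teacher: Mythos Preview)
Your proof is correct and follows essentially the same approach as the paper: both directions hinge on the identity $\bann(\fm) = (\bspan(\fm))^{\annsym}$ with the choices $\fm = \lsects{F}$ and $F = \bspan(\fm)$ respectively. The only cosmetic difference is that for the ($\Leftarrow$) direction you invoke the stated fact that smoothness of $F$ is equivalent to $F = \bspan(\lsects{F})$, whereas the paper unpacks this into an explicit double-inclusion argument using Proposition~\ref{thm:1}; your version is slightly more economical but the content is identical.
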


\begin{proof}
 Suppose $G$ is cosmooth, so that $G=\bann(\fm)=(\bspan(\fm))^{\annsym}$ for
 some family $\fm$ of local sections of $E^{\ast}$.  
 Since $\bspan(\fm)$ is by definition a smooth
generalized subbundle of $E^{\ast}$, the conclusion follows.

Conversely,  suppose that 
$G=F^{\annsym}$ for some smooth generalized subbundle $F$ of $E^{\ast}$.
Let $\fm = \lsects{F}$.  We claim that $F^{\annsym}=\bann(\fm)$.

Let $v\in F^{\annsym}_{p}$.  If $s\in\fm$ and $p\in\dom(\fm)$, then
$s(p)\in F_{p}$ and $s(p)(v)=0$.
Thus, $v\in \bann(\fm)$, and we conclude $F^{\annsym}\subseteq
\bann(\fm)$.

Next, take $v\in \bann(\fm)_{p}$.  Let $\lam\in F_{p}$.
Since $F$ is smooth, Proposition~\ref{thm:1} shows there is an
$s\in\lsects{F}$ with $s(p)=\lam$.  Then $\lam(v)=s(p)(v)=0$, and
thus $v\in F^{\annsym}_{p}$. Therefore $\bann(\fm)\subseteq F^{\annsym}$.
\end{proof}

If $F$ is a smooth generalized subbundle of $E^{\ast}$,
then the function $p\mapsto \dim(F_{p})$ is lower semicontinuous.
The dimension of $F^{\annsym}_{p}$ is $\dim(E_{p})-\dim(F_{p})$.
Hence the function $p\mapsto \dim(F^{\annsym}_{p})$ is \emph{upper}
semicontinuous. Thus a cosmooth generalized
subbundle need not be smooth.

\begin{example}
Let $E$ and $F$ be vector bundles over $M$ and let
$\phi\colon E\to F$ be a vector bundle map.  It is well known
that if $\phi$ has constant rank, the image and kernel of $\phi$ are
regular subbundles.  If $\phi$ does not have constant rank,
the image of $\phi$ is a smooth generalized subbundle of $F$
and the kernel of $\phi$ is a cosmooth generalized subbundle
of $E$.
\end{example}

\begin{defn}
  Let $E$ be a vector bundle over a manifold $M$ and let $G$ be
  a generalized subbundle of $E$.  If $U\subseteq M$ is an open
set, we say \textbf{$G$ is finitely generated over $U$}
(or, \textbf{is finitely spanned over $U$}) if there
are a finite number of sections $s_{1},\dots,s_{k}\in \sects{U}{G}$
so that, for all $p\in U$,
\begin{equation*}
  G_{p} = \spn\lset s_{1}(p),\dots, s_{k}(p)\rset .
\end{equation*}
We say that $s_{1},\dots,s_{k}$ \textbf{generate $G$ over $U$}.
If we can take $U=M$, we say that $G$ is \textbf{globally finitely
  generated}.
\end{defn}

\section{Fr\'echet spaces of smooth functions}

We now introduce the function spaces we need.  We denote
by $\enorm{\cdot}$ the Euclidean norm on any of the spaces
$\reals^{n}$.

If $f\colon \reals^{n}\to \reals^{m}$ is a bounded function, let
\begin{equation*}
  \norm{f} = \sup\lset \enorm{f(x)}: x\in \reals^{n}\rset
\end{equation*}
be the supremum norm.

We say that $f\colon \reals^{n}\to \reals^{m}$ is in
$\bcsp{\reals^{n}}{\reals^{m}}$ if $f$ is $\C^{\infty}$ and $f$ and
all of its partial derivatives are bounded on $\reals^{n}$.  If $f$ is
such a function, we can define seminorms $p_{k}$ for $k=0,1,2,\dotsc$ by
\begin{equation*}
  p_{k}(f) = \max \Set{ \norm[\bigg]{\frac{\partial^{\abs{\alpha}}f}{\partial
    x^{\alpha}}} : \abs{\alpha} = k},
\end{equation*}
where the maximum is taken over all multi-indices $\alpha$ of order
$k$.  We combine the $p_{k}$'s to form seminorms $\norm{\cdot}_{k}$ by
\begin{equation*}
  \norm{f}_{k} = \sum_{j=0}^{k} p_{j}(f).
\end{equation*}
Thus $\norm{f}_{k}\leq
\norm{f}_{k+1}$.  Note that these seminorms are actually norms.

These norms $\norm{\cdot}_{k}$ (or, equivalently, the seminorms
$p_{k}$) induce a topology on $\bcsp{\reals^{n}}{\reals^{m}}$  which
makes $\bcsp{\reals^{n}}{\reals^{m}}$ into a Fr\'echet space.

Recall that a sequence $\lset f_{i} \rset_{i=1}^{\infty}$ in
$\bcsp{\reals^{n}}{\reals^{m}}$ is Cauchy if it is Cauchy with 
respect to the norm $\norm{\cdot}_k$ for each $k$.  
Therefore, to show that
a series $\sum_{i=0}^{\infty}f_{i}$ converges in $\bcspnm$, it
suffices to show that each of the series
\begin{equation*}
  \sum_{i=1}^{\infty} \norm{f_{i}}_{k},\qquad k=0,1,2,\dots,
\end{equation*}
converges. 
If $\sum_{i=1}^{\infty} f_{i}$ converges in $\bcspnm$ then
$f:=\sum_{i=1}^{\infty}f_{i}$ can be evaluated pointwise,
because convergence in $\bcspnm$ implies pointwise convergence.

\section{The Main Theorem}

\begin{thm}\label{thm:2}
 Let $M$ be a connected manifold and let $E$ be a vector bundle over $M$. 
Let $G$ be a smooth generalized subbundle of $E$.  Then $G$ is
globally finitely generated.
\end{thm}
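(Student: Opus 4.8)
The plan is to replace the defining family of local sections of $G$ by a single \emph{countable} family of \emph{global} sections of $G$ that still spans $G_{p}$ at every point, and then to collapse that countable family to a finite one by forming convergent series of the kind controlled in Section~3. Before the construction I would record two facts. Since each fibre $G_{p}$ is a \emph{linear} subspace of $E_{p}$, multiplying a local section of $G$ by a bump function supported in its domain yields a global section of $G$; combined with Proposition~\ref{thm:1} and smoothness of $G$, this shows that for every $p$ and every $v\in G_{p}$ there is a global section $s\in\gsects{G}$ with $s(p)=v$, i.e.\ the evaluation $\gsects{G}\to G_{p}$, $s\mapsto s(p)$, is surjective. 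Also, $\dim(G_{p})\le r:=\fdim(E)$ for all $p$, and $p\mapsto\dim(G_{p})$ is lower semicontinuous, as observed in Section~2.

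Next I would produce the countable spanning family. Using a countable, locally finite atlas over which $E$ is trivial, one puts on $\gsects{E}$ the Fr\'echet topology assembled from (cut-offs of) the seminorms $\norm{\cdot}_{k}$ of Section~3; then pointwise evaluation is continuous and $\gsects{G}$ is a \emph{closed} subspace of $\gsects{E}$, since each $G_{p}$ is finite-dimensional, hence closed, and convergence in this topology forces pointwise convergence. A Fr\'echet space is separable, so I pick a countable dense subset $\fm=\lset s_{i}:i\ge1\rset$ of $\gsects{G}$. For each $p$, $\lset s_{i}(p):i\ge1\rset$ is dense in $G_{p}$ by continuity of evaluation together with the surjectivity above, and a dense subset of a finite-dimensional space spans it; hence $\bspan(\fm)=G$. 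After multiplying each $s_{i}$ by a suitable small positive constant I may also assume that all the series $\sum_{i}\norm{s_{i}}_{k}$ converge (chart by chart, for each $k$); then, by the criterion in Section~3, $\sum_{i}c_{i}s_{i}$ converges in $\gsects{E}$ — hence, $\gsects{G}$ being closed, in $\gsects{G}$ — for every bounded coefficient sequence $(c_{i})$, and it may be evaluated termwise.

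The heart of the proof is then to choose finitely many sections $t_{1},\dots,t_{N}$ of the form $t_{j}=\sum_{i}c_{ij}s_{i}$ — the $c_{ij}$ being bounded constants, or bounded smooth functions supplied by a partition of unity — so that $\lset t_{1}(p),\dots,t_{N}(p)\rset$ spans $G_{p}$ for \emph{every} $p$. The guiding principle is that at each fixed $p$ at most $r$ of the vectors $s_{i}(p)$ are needed, so for ``generic'' coefficients the span is unchanged; the content is to make a single choice work simultaneously at all points. A natural device is to take Vandermonde-type weights $c_{ij}=\lam_{i}c_{i}^{j}$ with the $c_{i}$ distinct numbers in $(0,1)$: a failure of $\lset t_{j}(p)\rset$ to span $G_{p}$ then produces a nonzero functional $\mu$ on $G_{p}$ with $\sum_{i}\lam_{i}\,\mu(s_{i}(p))\,c_{i}^{j}=0$ for all the indices $j$ in play, and since $(\lam_{i}\mu(s_{i}(p)))_{i}$ is absolutely summable while the $s_{i}(p)$ span $G_{p}$, the rigidity of exponential sums $\sum_{i}d_{i}e^{c_{i}z}$ (Fourier/Laplace uniqueness for distinct bounded frequencies) forbids this once enough powers $j$ are used, with the bound $\dim(G_{p})\le r$ capping how many are needed.

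I expect this last step to be the main obstacle. The trouble is that the set of points at which a given finite family of sections spans $G$ need \emph{not} be open, because $\dim(G_{p})$ can jump upward as one approaches a point where $G_{p}$ is small, so one cannot simply cover $M$ by such sets and glue with a partition of unity; this is presumably why naive arguments fail and why the problem has been thought to be false even locally. Overcoming it is exactly where the uniform rank bound $\dim(G_{p})\le r$ must be combined with the analytic rigidity above (or, in a partition-of-unity formulation, with careful bookkeeping of which $s_{i}$ dominates the tail of the series on which region). Everything else — separability of $\gsects{G}$, its closedness in $\gsects{E}$, convergence of the series, and termwise evaluation — is routine once the framework of Section~3 is in place.
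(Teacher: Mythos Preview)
Your setup through the countable spanning family is sound (modulo one slip: ``a Fr\'echet space is separable'' is false in general --- $\ell^{\infty}$ is a counterexample --- though $\gsects{E}$ over a second-countable manifold is indeed separable, so the conclusion survives). The genuine gap is in the Vandermonde step. With $t_{j}=\sum_{i}\lam_{i}c_{i}^{j}s_{i}$ and a nonzero $\mu\in G_{p}^{\ast}$ annihilating all $t_{j}(p)$, you obtain $\sum_{i} d_{i}c_{i}^{j}=0$ for $j=1,\dots,N$, where $d_{i}=\lam_{i}\mu(s_{i}(p))$. You then invoke ``rigidity of exponential sums'' to force $d_{i}\equiv 0$. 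But that rigidity requires either that only finitely many $d_{i}$ are nonzero (classical Vandermonde) or that the constraint holds for \emph{all} $j$ (Carlson-type theorems). Here neither holds: infinitely many $d_{i}$ can be nonzero, and you have only $N$ linear constraints on an $\ell^{1}$ sequence, whose kernel is infinite-dimensional. The bound $\dim G_{p}\le r$ caps the dimension of the \emph{target}, not the number of nonzero $d_{i}$; there is no reason the $(d_{i})$ arising from varying $p$ and $\mu$ should avoid that kernel. So the argument, as written, does not close.

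The paper's proof resolves exactly this difficulty by a device you did not consider: stratify $M$ by $\Sigma_{d}=\{p:\dim G_{p}=d\}$ and handle each stratum separately. After trivialising $E\subset M\times\reals^{m}$, cover $\Sigma_{d}$ by countably many balls $B_{i}$, and on each $2B_{i}$ build a smooth orthogonal projection $P_{i}(q)$ onto a $d$-dimensional subspace of $G_{q}$ (coming from a local $d$-frame). The point is that on $\Sigma_{d}\cap 2B_{i}$ every $P_{i}(q)$ \emph{coincides} with the intrinsic orthogonal projection $Q_{q}$ of $\reals^{m}$ onto $G_{q}$, which is independent of $i$. Hence the convergent sum $S_{\alpha}=\sum_{i}\phi_{i}P_{i}E_{\alpha}$ (built exactly as in your Section~3 framework) collapses on $\Sigma_{d}$ to $S_{\alpha}(p)=\phi(p)\,Q_{p}e_{\alpha}$ with $\phi(p)>0$, so $S_{1}(p),\dots,S_{m}(p)$ span $G_{p}$. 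Repeating for each $d$ gives at most $m\cdot\maxdim(G)$ global generators. The stratification is precisely what lets the infinite sum factor through a single operator $Q_{p}$, replacing your missing rigidity argument by an algebraic identity.
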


As we will see, it is possible to give an explicit upper bound on the
number of global sections needed to generate $G$ under appropriate
assumptions. The remainder of this section contains the proof of this theorem.
Before we begin the proof, we note this important corollary.

\begin{cor}\label{thm:3}
 Let $M$ and $E$ be as in Theorem~\ref{thm:2}.  If $G$ is a cosmooth
generalized subbundle of $E$, then there are finitely many
globally defined sections $s_{1},\dots,s_{k}$ of $E^{\ast}$ such that
for each $p\in M$,
\begin{equation*}
  G_{p} = \lset v\in E_{p}: s_{1}(p)(v)=0,\dots, s_{k}(p)(v)=0\rset.
\end{equation*}
In other words, $G$ is defined as the kernel of a finite collection of
global sections of $E^{\ast}$.
\end{cor}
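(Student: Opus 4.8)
The plan is to deduce Corollary~\ref{thm:3} directly from Theorem~\ref{thm:2} together with the characterization of cosmooth subbundles proved earlier in the excerpt. Suppose $G$ is a cosmooth generalized subbundle of $E$. By the theorem preceding the Example (the one stating that $G$ is cosmooth if and only if $G = F^{\annsym}$ for some smooth generalized subbundle $F$ of $E^{\ast}$), we may write $G = F^{\annsym}$ where $F \subseteq E^{\ast}$ is smooth.

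Next I would apply Theorem~\ref{thm:2} to the smooth generalized subbundle $F$ of the vector bundle $E^{\ast}$ over the connected manifold $M$. This yields finitely many globally defined sections $s_{1},\dots,s_{k} \in \gsects{E^{\ast}}$ such that $F_{p} = \spn\lset s_{1}(p),\dots,s_{k}(p)\rset$ for every $p \in M$.

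It then remains to check that these same sections cut out $G$ as a kernel. Fix $p \in M$ and $v \in E_{p}$. By definition, $v \in F^{\annsym}_{p}$ if and only if $\lam(v) = 0$ for all $\lam \in F_{p}$. Since $F_{p}$ is spanned by $s_{1}(p),\dots,s_{k}(p)$, linearity shows that $\lam(v) = 0$ for all $\lam \in F_{p}$ if and only if $s_{i}(p)(v) = 0$ for $i = 1,\dots,k$. Hence
\begin{equation*}
  G_{p} = F^{\annsym}_{p} = \lset v\in E_{p}: s_{1}(p)(v)=0,\dots, s_{k}(p)(v)=0\rset,
\end{equation*}
which is exactly the asserted description of $G$ as the common kernel of finitely many global sections of $E^{\ast}$.

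There is essentially no obstacle here: the corollary is a formal consequence of the main theorem applied to the dual bundle, and the only content is the elementary linear-algebra observation that annihilating a spanning set is the same as annihilating the whole subspace. The one point worth a remark is that Theorem~\ref{thm:2} must be invoked for the bundle $E^{\ast}$ rather than $E$ itself, and that $E^{\ast}$ is again a vector bundle over the same connected manifold $M$, so the hypotheses are met. No continuity or dimension-count beyond what is already recorded in the excerpt is needed.
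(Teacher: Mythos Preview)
Your proof is correct and is precisely the argument the paper intends: the corollary is stated without a separate proof because it follows immediately from Theorem~\ref{thm:2} applied to the smooth generalized subbundle $F\subseteq E^{\ast}$ furnished by the characterization of cosmooth subbundles, together with the trivial linear-algebra fact that annihilating a spanning set is the same as annihilating the subspace it spans.
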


We now prove Theorem~\ref{thm:2}.  Let $M$ be a connected manifold and let $E$ be a
vector bundle over $M$.  The fiber dimension of $E$ will be denoted
by $\fdim(E)$.  Let $G$ be a smooth generalized subbundle of $E$.

We begin by
making two reductions in the problem.
First, we invoke the theorem that every vector bundle over a connected
manifold is isomorphic to a subbundle of a trivial bundle. Thus, for
our problem, we may assume that $E$ is a subbundle of a trivial bundle
$\tbundle{M}{m}=M \times \reals^{m}$ for some integer $m$.  This
theorem, without an estimate on $m$, is well known in the case where
$M$ is compact.  The proof in the noncompact case, which gives an
estimate of $m$, uses topological dimension theory.  A reference for
this material is Greub, Halperin and Vanstone \cite{MR0336650}*{p.~77}, 
but we will need to use a more refined treatment of the dimension
theory, as in Munkres \cite{MR0198479} or Engelking \cite{MR1363947}.
The main point for our purposes is that an upper bound for $m$ is
$\fdim(E)(\dim(M)+1)$.

Our generalized subbundle $G\subseteq E$ is now contained in
$\tbundle{M}{m}$ and is smooth when considered as a generalized
subbundle of $\tbundle{M}{m}$.  Thus, to prove the theorem, it will
suffice to consider a smooth generalized subbundle $G$ of a trivial
bundle $\tbundle{M}{m}$.   We identify each subspace $G_{p}$ with a 
subspace of $\reals^{m}$
and identify sections of $\tbundle{M}{m}$, and hence sections of
$G$, with $\reals^{m}$-valued functions.
We will switch between these points of view as convenient.


Next, we can properly embed $M$ in $\reals^{n}$ for some $n$.  This is
not really necessary for our proof to work, but it makes dealing with
the functions spaces involved simpler.  
If $f\colon U \to \reals^{m}$ is a smooth function defined
on an open subset $U$ of $M$, we can find a smooth extension of $f$ to
a function $\tilf\colon \tilU \to \reals^{m}$, where $\tilU$
is an open subset of $\reals^{n}$ such that $\tilU \cap M = U$.
This can be done by a partition of unity argument, but perhaps the
fastest proof is to note that the tubular neighborhood theorem
says there is an open set $\scrO$ in $\reals^{n}$ containing $M$ and
a smooth retraction $r\colon \scrO\to M$ (i.e., $r(p)=p$ for $p\in
M$).  We define $\tilU = r^{\inv}(U)$ and $\tilf=f\circ
r$.

Because $G$ is smooth, it is
the span of the family $\fm = \lsects{G}$ of local sections of
$\tbundle{M}{m}$.   Considering the elements of $\fm$ to be locally
defined vector-valued functions on $M$, we can extend them to
locally defined vector-valued functions on $\reals^{n}$.  This gives
us a family $\tilfm =\{ \tilf : f\in \fm\}$ of locally defined vector-valued functions whose
restriction to $M$ is $\fm$.  

Considering $\tilfm$ as a subset of $\lsects{\tbundle{\reals^{n}}{m}}$,
we define a generalized subbundle $\tilG$ of
$\tbundle{\reals^{n}}{m}$ by $\tilG = \Span(\tilfm)$. For each
$p\in M$, $\tilG_{p}=G_{p}$, and thus the restriction of $\tilG$ to $M$ is
$G$.

Given a set of global generators for $\tilG$, the restriction of these generators
to $M$ determines a set of global generators for
$G$.  Thus, to prove Theorem~\ref{thm:2}, it 
suffices to prove the
following proposition.

\begin{prop}\label{thm:4}
  If $G$ is a smooth generalized subbundle of the trivial bundle
  $\tbundle{\reals^{n}}{m}$, then $G$ is globally finitely generated.
\end{prop}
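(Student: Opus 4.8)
The plan is to exploit the countable-dimensional nature of $\reals^n$ together with the Fréchet space structure on $\bcspi{n}{m}$ to build finitely many global generators as \emph{convergent series} of locally supported sections. The key structural fact is that $\dim(G_p)\le m$ for every $p$, so at every point it takes at most $m$ sections of $G$ to span $G_p$; the whole difficulty is the lack of any continuity or constant-rank hypothesis on $p\mapsto G_p$. First I would fix a countable, locally finite cover $\{U_i\}_{i\in\natsz}$ of $\reals^n$ by bounded open sets (say, by balls) together with a subordinate partition of unity $\{\chi_i\}$, chosen fine enough that on each $U_i$ there are sections $s_{i,1},\dots,s_{i,m}\in\sects{U_i}{G}$ whose values span $G_p$ at every point $p\in U_i$ — one gets $m$ such sections by starting from a pointwise spanning set at one point (using smoothness of $G$), padding with zero sections up to $m$, and shrinking. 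Cutting off by $\chi_i$, each $\chi_i s_{i,j}$ extends by zero to a global section of $G$ lying in $\bcspi{n}{m}$ after multiplying by a suitable rapidly-decaying scalar.

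The central idea is then to \emph{combine across the index $i$} rather than sum over all of it: I would produce $m$ global sections $\sigma_1,\dots,\sigma_m$ of $G$ of the form
\begin{equation*}
  \sigma_j = \sum_{i\in\natsz} c_i\,\chi_i\, s_{i,j},
\end{equation*}
where the scalars $c_i>0$ are chosen small enough and decaying fast enough that each of the series $\sum_i \norm{c_i\,\chi_i\, s_{i,j}}_k$ converges for every $k$; by the completeness criterion recalled in Section~4, $\sigma_j$ then lies in $\bcspi{n}{m}$ and can be evaluated pointwise, so that $\sigma_j(p)=\sum_i c_i\chi_i(p)s_{i,j}(p)\in G_p$ for every $p$. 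The point is that at a given $p$, only finitely many terms are nonzero (local finiteness of the cover), and the surviving terms $\{c_i\chi_i(p)s_{i,j}(p): p\in U_i,\ j=1,\dots,m\}$ still span $G_p$, because for any single index $i_0$ with $\chi_{i_0}(p)\neq 0$ the sections $s_{i_0,1},\dots,s_{i_0,m}$ already span $G_p$ and the scalar $c_{i_0}\chi_{i_0}(p)$ is a nonzero constant. Thus $\sigma_1,\dots,\sigma_m$ span $G_p$ at every $p$, giving $m$ global generators.

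The main obstacle — and the step requiring genuine care — is arranging the scalars $c_i$ so that \emph{all} the seminorm series converge \emph{and} no cancellation destroys the spanning property. Convergence of $\sum_i \norm{c_i\chi_i s_{i,j}}_k$ for all $k$ simultaneously is handled by a diagonal choice: since each $\norm{\chi_i s_{i,j}}_k$ is a finite number, pick $c_i$ with $c_i\le 2^{-i}\big(1+\max_{j,\,k\le i}\norm{\chi_i s_{i,j}}_k\big)^{-1}$, so that for fixed $k$ the tail $\sum_{i\ge k} c_i\norm{\chi_i s_{i,j}}_k$ is dominated by $\sum 2^{-i}$. The spanning issue is more subtle because different terms contribute vectors to the same fiber; but as noted above this is dodged entirely by keeping the $\sigma_j$ "parallel" — the $j$-th series only ever uses the $j$-th section from each chart — so that at any $p$ the contributions $c_i\chi_i(p)s_{i,j}(p)$ summed over the finitely many active $i$ need \emph{not} themselves be analyzed: it is enough that the single chart $U_{i_0}$ already supplies a spanning set scaled by a nonzero constant, and the remaining terms can only be harmless extra vectors inside $G_p$. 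I would close by remarking, as promised after Theorem~\ref{thm:2}, that this yields the explicit bound of $m$ global generators, hence $\fdim(E)(\dim M+1)$ in the setting of the original theorem.
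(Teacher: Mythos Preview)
Your argument has two genuine gaps. The first is essentially circular: you assert that the cover $\{U_i\}$ can be taken fine enough that on each $U_i$ there are sections $s_{i,1},\dots,s_{i,m}\in\sects{U_i}{G}$ spanning $G_p$ at \emph{every} $p\in U_i$, but this is exactly the local case of the proposition you are proving. Your justification---start from a spanning set at one point, pad with zeros, shrink---fails because $q\mapsto\dim G_q$ is only lower semicontinuous: there may be points $q$ arbitrarily close to $p$ with $\dim G_q>\dim G_p$, and sections chosen to span $G_p$ span only a subspace of dimension $\dim G_p$ in $G_q$ nearby; zero sections and shrinking add nothing. Local finite generation was in fact conjectured to be \emph{false} in the literature cited in the introduction, so it cannot be slipped in as a preliminary step.

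The second gap is the cancellation issue you dismiss. The vector $\sigma_j(p)=\sum_i c_i\chi_i(p)s_{i,j}(p)$ is a \emph{sum}, not a list, and contributions from overlapping charts can cancel: if $s_{1,1}(p)=v$, $s_{2,1}(p)=-v$, and $c_1\chi_1(p)=c_2\chi_2(p)$, then $\sigma_1(p)=0$ regardless of the fact that each chart's family separately spans $G_p$. The terms from other charts are not ``harmless extra vectors''; they are added to, and may annihilate, the good term. The paper's proof handles both problems at once by stratifying into the sets $\dimset{d}=\{p:\dim G_p=d\}$ and, on a cover of $\dimset{d}$, replacing your hypothetical local spanning sets with smooth orthogonal projections $P_i$ onto $d$-dimensional subspaces of $G$. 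The key point is that for $p\in\dimset{d}$ every $P_i(p)$ coincides with the canonical projection $Q_p$ onto $G_p$, so the series $\sum_i\phi_iP_iE_\alpha$ collapses at such $p$ to $\phi(p)\,Q_pE_\alpha(p)$ with $\phi(p)>0$, and no cancellation is possible. This yields $m$ sections per nonempty stratum, hence at most $m\cdot\maxdim(G)$ global generators rather than $m$.
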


 The proof of this proposition will occupy most of the
rest of this section.  To begin, we adopt some notation and terminology.

If $B$ is a Euclidean ball in $\mathbb{R}^n$, we denote by $2B$ the ball with the same center and
twice the radius.  Let $\ratballs$ denote
the set of all balls of rational radius centered at points that
have rational coordinates; $\ratballs$ is a countable basis
for the topology of $\reals^{n}$.

For $0\leq d \leq m$, let 
\begin{equation*}
\dimset{d} = \lset p\in \reals^{n}: \dim(G_{p})=d\rset.
\end{equation*}

Fix $d\geq 1$ such that $\dimset{d}\ne\emptyset$.  Our goal now
 is to construct finitely
many globally defined sections which span $G$ at each point of $\dimset{d}$. Note that spanning 
is automatic for points in
$\dimset{0}$.

The usual Euclidean metric on $\reals^{m}$ induces a metric on
the bundle $\tbundle{\reals^{n}}{m}$; we use this metric throughout the rest of the proof.  For each $p\in \reals^{n}$, let $Q_{p}$ denote the orthogonal
projection operator on $\tbundle{\reals^{n}}{m}_{p}$ whose image is
$G_{p}$.

Let $p$ be a point of $\dimset{d}$.  We can find sections
$s_{1},\dots,s_{d}$ of $G$ defined on some open neighborhood of $p$ such
that $s_{1}(p),\dots,s_{d}(p)$ is a basis of $G_{p}$.  These sections
are linearly independent on some open neighborhood $U$ of $p$.
At each point $q\in U$, define
$D_{q}\subseteq G_{q}$ to be the span of $s_{1}(q),\dots,s_{d}(q)$.  For each $q\in U$,
let $P(q)$ be the orthogonal projection operator on
$\tbundle{\reals^{n}}{m}_q$ whose image is $D_{q}$.  At any point $q$
in $U\cap \dimset{d}$, we have $D_{q}=G_{q}$, and so $P(q)=Q_{q}$ at
such points.

We can think of $P$ as a vector bundle mapping of $\tbundle{U}{m}$ to
itself, or as a section of the bundle
$\Hom(\tbundle{U}{m},\tbundle{U}{m})$, whose fiber at $q$ is the vector
space $\Hom(\tbundle{U}{m}_{q},\tbundle{U}{m}_{q})$ of linear maps
$\tbundle{U}{m}_{q}\to \tbundle{U}{m}_{q}$.  Because
$\tbundle{U}{m}$ is a trivial bundle, $P$ can be thought of as a map $U\to
\Hom(\reals^{m},\reals^{m})$.  The mapping $q\mapsto P(q)$ is smooth.
Indeed, if we think of $P(q)$ as a linear map on $\reals^{m}$, its
matrix with respect to the standard basis can be explicitly
constructed by applying the Gram-Schmidt process to the vectors
$s_{1}(q),\dots, s_{d}(q)$, which shows that the matrix entries
are smooth functions of $q$.

The following lemma summarizes the discussion above.

\begin{lem}\label{thm:plemma}
For each $p\in \dimset{d}$, choose  a ball $B\in \ratballs$ such that $p\in B$ and $2B\subseteq U$.
There exists a smooth vector bundle map $P$ of
  $\tbundle{2B}{m}$ to itself with the following properties:
  For each $q\in 2B$,
  \begin{enumerate}
  \item $P(q)$ is an orthogonal projection operator.
    \item $\im(P(q))$, the image of $P(q)$, is contained in $G_{q}$.
      \item $\im(P(q))$ has dimension $d$.
      \item If $q\in 2B\cap \dimset{d}$ then $im(P(q))=G_{q}$
        and so $P(q)=Q_{q}$.
   \end{enumerate}
\end{lem}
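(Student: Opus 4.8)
The plan is to collect the objects built in the discussion preceding the statement and to check that, once restricted to $2B\subseteq U$, they satisfy all four listed properties. Recall that smoothness of $G$ lets us choose $s_{1},\dots,s_{d}\in\lsects{G}$ on a common neighborhood of $p$ with $s_{1}(p),\dots,s_{d}(p)$ a basis of $G_{p}$, and that these sections stay linearly independent on an open neighborhood $U$ of $p$ because linear independence is an open condition. For $q\in 2B$ I would set $D_{q}=\spn\lset s_{1}(q),\dots,s_{d}(q)\rset$ and let $P(q)$ be the orthogonal projection of $\tbundle{\reals^{n}}{m}_{q}$ onto $D_{q}$ for the fixed Euclidean metric. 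Property (1) is then immediate; property (3) holds because the $s_{i}(q)$ are linearly independent on $2B$; and property (2) holds because each $s_{i}$ is a section of $G$, so that $D_{q}\subseteq G_{q}$.

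The one substantive point is that $P$ is a \emph{smooth} vector bundle map of $\tbundle{2B}{m}$ to itself. To see this I would apply the Gram--Schmidt process to the pointwise linearly independent smooth frame $s_{1}(q),\dots,s_{d}(q)$, obtaining an orthonormal frame $e_{1}(q),\dots,e_{d}(q)$ for $D_{q}$; since the frame is linearly independent throughout $2B$, no Gram--Schmidt normalization divides by zero, so each $e_{i}$ depends smoothly on $q$. The projection then has the coordinate-free form $P(q)v=\sum_{i=1}^{d}\langle v,e_{i}(q)\rangle\,e_{i}(q)$, which displays the matrix entries of $P(q)$ as smooth functions of $q$; hence $P$ is a smooth section of $\Hom(\tbundle{2B}{m},\tbundle{2B}{m})$.

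Finally, for property (4), suppose $q\in 2B\cap\dimset{d}$. Then $\dim(G_{q})=d$, while $D_{q}\subseteq G_{q}$ has dimension $d$ by property (3), so $D_{q}=G_{q}$; thus $\im(P(q))=G_{q}$, and since $Q_{q}$ is by definition the orthogonal projection with image $G_{q}$, uniqueness of orthogonal projections gives $P(q)=Q_{q}$. I expect no serious obstacle here: the only place requiring care is the smoothness claim, which is handled by the nonvanishing of the relevant Gram determinant on $2B$ ensuring that Gram--Schmidt preserves smoothness. Everything else is a direct verification.
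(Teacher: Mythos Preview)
Your proposal is correct and follows essentially the same approach as the paper: the lemma in the paper is explicitly introduced as a summary of the preceding discussion, which constructs $P$ as the orthogonal projection onto $D_{q}=\spn\lset s_{1}(q),\dots,s_{d}(q)\rset$, argues smoothness via Gram--Schmidt, and observes that $D_{q}=G_{q}$ at points of $\dimset{d}$. Your write-up makes each verification slightly more explicit (for instance, invoking uniqueness of orthogonal projections for property~(4)), but the argument is the same.
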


Since $\ratballs$ is countable, the lemma shows that we can find a
countable collection of balls $\lset B\rset_{i\in I}$ that
covers $\dimset{d}$, where for each
ball $B_{i}$ there is a vector bundle map $P_{i}$ over $2B_{i}$ with
the properties in the lemma. There may be many such vector
bundle maps over a given ball $2B_{i}$; we
only need one, so we just pick one arbitrarily.

Let $e_{1},\dots,e_{m}$ denote the standard basis of $\reals^{m}$.  Let
$E_{1},\dots, E_{m}$ be the corresponding constant sections of
$\tbundle{\reals^{n}}{m}$.

For each $i\in I$, choose a smooth bump function $\psi_{i}$ on $\mathbb{R}^n$ so
that $0\leq \psi_{i}\leq 1$, $\psi_{i}=1$ on $B_{i}$ and
$\supp(\psi_{i})\subset 2B_{i}$.

On $2B_{i}$ we define smooth sections $P_{i}E_{\alpha}$ of $G$
by $p\mapsto
P_{i}(p)E_{\alpha}(p)$ for $\alpha=1,\dots,m$. 
Multiplying by $\psi_{i}$ we get sections
$\psi_{i}P_{i}E_{\alpha}$ such that
$\supp(\psi_{i}P_{i}E_{\alpha})\subset 2B_{i}$.  We extend the
section $\psi_{i}P_{i}E_{\alpha}$
smoothly to $\mathbb{R}^n$ by defining it to be zero
outside $2B_{i}$. We use the same
notation for the extended sections. We also adopt the notational
convention that
\begin{equation}
  \label{eq:3}
  0\cdot \text{(undefined)} =0
\end{equation}
in this context, as many authors do implicitly.

Let us deal first with the case where our collection of balls $\lset
B_{i}\rset_{i\in I}$ is countably infinite, in which case we can
assume the index set $I$ is the natural numbers.

Since $\psi_{i}$ has compact support, $\psi_{i}$ and its
derivatives are bounded, so $\psi_{i}\in \bcsp{\reals^{n}}{\reals}$.
Similarly, the sections $\psi_{i}P_{i}E_{\alpha}$ have compact
support, so we can view them as vector-valued functions
in $\bcspnm$.   

For each $i$, we can find a constant $c_{i}>0$ so that
\begin{align*}
  c_{i}\norm{\psi_{i}}_{i}&\leq \frac{1}{2^{i}},\\
  c_{i}\norm{\psi_{i}P_{i}E_{\alpha}}_{i}&\leq \frac{1}{2^{i}},\qquad
  \alpha=1,\dots, m.
\end{align*}
Note that the order of the seminorm is the same as the index
here.

Now define functions $\phi_{i}=c_{i}\psi_{i}$, so we can rewrite the
inequalities above as 
\begin{align*}
  \norm{\phi_{i}}_{i}&\leq \frac{1}{2^{i}},\\
  \norm{\phi_{i}P_{i}E_{\alpha}}_{i}&\leq \frac{1}{2^{i}},\qquad
  \alpha=1,\dots, m.
\end{align*}

We now attempt to define a smooth function $\phi$ and smooth sections
$S_{\alpha}$ of $G$ by
\begin{align*}
  \phi &= \sum_{i=1}^{\infty} \phi_{i}\, ,\\
  S_{\alpha} &= \sum_{i=1}^{\infty} \phi_{i} P_{i}E_{\alpha}.
\end{align*}
To do this, we must show these series are convergent in the
appropriate function spaces.

To show that the series $\sum_{i}\phi_{i}$ is convergent in
$\bcsp{\reals^{n}}{\reals}$, it suffices to show that the series
\begin{equation}\label{eq:1}
  \sum_{i=1}^{\infty} \norm{\phi_{i}}_{k}
\end{equation}
converges for each $k$.   To show that
the series \eqref{eq:1} converges, it suffices to show for fixed $k$ that
the tail
\begin{equation*}
  \sum_{i=k}^{\infty} \norm{\phi_{i}}_{k}
\end{equation*}
of the series converges.  Since the norms $\norm{\cdot}_{j}$ are
increasing in $j$, we have
\begin{equation*}
  \sum_{i=k}^{\infty} \norm{\phi_{i}}_{k} \leq \sum_{i=k}^{\infty}
\norm{\phi_{i}}_{i} \leq \sum_{i=k}^{\infty} \frac{1}{2^{i}}=2^{-k+1}.
\end{equation*}
We conclude that the series $\sum_{i}\phi_{i}$ converges in
$\bcsp{\reals^{n}}{\reals}$, so $\phi = \sum_{i}\phi_{i}$ is a smooth
function.  As previously mentioned, we can evaluate this series
pointwise, so at any point $p\in M$ we have
\begin{equation}\label{eq:2}
  \phi(p) = \sum_{i=1}^{\infty} \phi_{i}(p).
\end{equation}
It follows that $\phi > 0$ on $\dimset{d}$, since a point $p\in
\dimset{p}$ is in one of the balls $B_{j}$ and
$\phi_{j}=c_{j}\psi_{j}=c_{j}>0$ on $B_{j}$. Since all of the terms in
the sum \eqref{eq:2} are nonnegative, we conclude that $\phi(p)\geq
c_{j}>0$.

Similarly, we can show that the series $\sum_{i}\phi_{i}P_{i}E_{\alpha}$ is
convergent in $\bcspnm$.  As above, the sum
\begin{equation*}
  \sum_{i=1}^{\infty} \norm{\phi_{i}P_{i}E_{\alpha}}_{k}
\end{equation*}
converges since we have
\begin{equation*}
  \sum_{i=k}^{\infty} \norm{\phi_{i}P_{i}E_{\alpha}}_{k} \leq 
\sum_{i=k}^{\infty} \norm{\phi_{i}P_{i}E_{\alpha}}_{i}
\leq \sum_{i=k}^{\infty} \frac{1}{2^{i}} =2^{-k+1}.
\end{equation*}
Thus, we have smooth  vector-valued functions, or to look at it
another way, sections of the trivial bundle, defined by
\begin{equation*}
  S_{\alpha} = \sum_{i=1}^{\infty} \phi_{i}P_{i}E_{\alpha}.
\end{equation*}
We can evaluate this sum pointwise and write
\begin{equation*}
  S_{\alpha}(p) = \sum_{i=1}^{\infty} \phi_{i}(p) P_{i}(p) E_{\alpha}(p),
\end{equation*}
using the convention \eqref{eq:3}. Since the image of each $P_{i}$ is
in $G$, the sum on the right-hand side of the equation above is
a convergent series in the closed subspace $G_{p}\subseteq
\tbundle{\reals^{n}}{m}_{p}$, so $S_{\alpha}(p)\in G_{p}$. Thus,
$S_{\alpha}$ is a section of $G$.

We now claim that if $p\in \dimset{d}$, then the sections
$S_{\alpha}(p)$ span $G_{p}$.  Recall from Lemma~\ref{thm:plemma}
 that if $p\in 2B_{i}$ then
$\im(P_{i}(p))=G_{p}$ and $P_{i}(p)=Q_{p}$.
 We then have
\begin{align*}
  S_{\alpha}(p) & = \sum_{i=1}^{\infty} \phi_{i}(p) P_{i}(p)
  E_{\alpha}(p)\\
&=\sum_{i=1}^{\infty} \phi_{i}(p) Q_{p} E_{\alpha}(p)\\
&=\sum_{i=1}^{\infty} Q_{p}[\phi_{i}(p) E_{\alpha}(p)]
=Q_{p} [\phi(p) E_{\alpha}(p)].
\end{align*}
For each $p$, $\{E_{1}(p),\dots, E_{m}(p)\}$ is a basis of
$\tbundle{\reals^{n}}{m}_{p}$.  Since $\phi(p)\ne 0$,
\begin{equation*}
 \phi(p)
E_{1}(p),\dots,\phi(p) E_{m}(p)
\end{equation*}
also form a basis.  
Thus,
\[
 \{Q_{p}\phi(p)
E_{1}(p),\dots,Q_{p}\phi(p) E_{m}(p) \}
\]
spans $G_{p}$, and thus
the vectors $S_{\alpha}(p)$ span $G_{p}$.

This completes the construction of a finite number of generators
for $\left.G\right|_{\Sigma_d}$ in the case where we have
a countably infinite collection of balls.  In the case where our
collection of balls $\lset B_{i}\rset_{i\in I}$ is finite, we can
dispense with the convergence questions and just define
\begin{align*}
  \phi &= \sum_{i\in I}\psi_{i}\\
  S_{\alpha} &= \sum_{i\in I}\psi_{i} P_{i}E_{\alpha},
\end{align*}
where these are finite sums.  A similar analysis shows that the
sections $S_{\alpha}$ span $G_{p}$ at every $p\in \dimset{d}$.

Finally, to complete the proof of Proposition~\ref{thm:4}, we
apply this construction for each $d$ such that $\dimset{d}\ne
\emptyset$.  For each such integer $d$, we get $m$ sections.
Putting all these sections together we get a finite set of
globally defined sections that span $G$ at each point, i.e.,
a finite set of global generators for $G$. \qed\newline

To finish this section, we discuss the number of sections this
construction yields and the case of disconnected manifolds.

If we denote by $\maxdim(G)$ the maximum dimension of the fibers of $G$,
the above construction will yield $m$ sections for every integer
$d$, $1\leq d \leq \maxdim(G)$ such that $\dimset{d}\ne \emptyset$,
and so a maximum of $m \maxdim(G)$ sections.  If desired,
we can get exactly $m \maxdim(G)$ sections by adding in $m$ copies
of the zero section for each $d$ such that $\dimset{d}=\emptyset$.

Recall that our original vector bundle $E$ is
isomorphic to a subbundle of the trivial bundle $\tbundle{M}{m}$.  
As mentioned
above, an upper estimate on $m$ is $(\dim(M)+1)\fdim(E)$.
In the case where $E=TM$,
we can get a better estimate.  By the hard Whitney embedding
theorem, $M$ can be embedded in $\reals^{n}$ where $n=2\dim(M)$.
Then $TM$ is isomorphic to a subbundle of the restriction of
$T\reals^{n}$ to $M$.  Since $T\reals^{n}$ is canonically
trivial, we see that $TM$ is isomorphic to a subbundle of
a trivial bundle of fiber dimension $n$, so we can take $m=n=2\dim(M)$
in this case.

Different authors use slightly different definitions of
manifolds and vector bundles.   One can define a manifold so that
the dimension is allowed to be different at different points. In this
case the dimension is locally constant, and so must be constant on
components.  With this definition, a manifold $M$ that is not
connected can have components of different dimensions.  If the
number of components is infinite, it is conceivable the dimensions
of the components could be unbounded, although one might be hesitant
to use the word ``manifold'' in that case.

Similarly, one can give a definition of the concept of a vector bundle
$E$ over a manifold $M$ 
that allows the fiber dimension to vary with the point.  The local
triviality condition  makes the fiber dimension locally constant,
and so constant on the components of $M$.  This point of view
is taken in some of the foundational literature behind this paper,
such as Swan \cite{MR0143225}.  If $M$ has infinitely many components, $E$
could have fibers of arbitrarily large dimension.  

For each component $C$ of $M$, we can find an $m_{C}$ so that
$E_{C}=E\mid_{C}$ is isomorphic to a subbundle of
$\tbundle{C}{m_{C}}$.  If we have an upper bound on the dimension
of the components of $M$ and on the dimension of the fibers of $E$,
we can get an upper bound $m$ on $m_{C}$.  Then, for each $C$,
$E_{C}$ is isomorphic to a subbundle of $\tbundle{C}{m}$.
Since we have an upper bound of the dimension of the fibers of
$E$, there is an upper bound on the dimension of the fibers
of $G$.  As above, we can construct on each $C$ a finite
set of global generators $S^{C}_{j}$ for $j=1,\dots, m\,\maxdim(G)$.
We can then define global sections $S_{j}$, $j=1,\dots, m\,\maxdim(G)$
by defining $S_{j}(p)=S^{C}_{j}(p)$, where $C$ is the component
containing $p$.  Thus, we will still have a finite number of global
generators in this case.

\section{Modules of Sections}

Let $E$ be a vector bundle over $M$ of fiber dimension $k$.
  For any open set $U\subseteq M$,
the space $\sects{U}{E}$ of sections of $E$ over $U$ is a module over
the ring $\rfuns{U}$ of smooth functions on $U$.  Every point $p\in M$
has a neighborhood $U$ on which there are sections $s_{1},\dots,s_{k}$
such that $s_{1}(q),\dots,s_{k}(q)$ form a basis of $E_{q}$ for all
$q\in U$.  Thus, for an arbitrary section $s\in \sects{U}{E}$,
we have $s(q)=f_{1}(q) s_{1}(q)+\dots+f_{k}(q)s_{k}(q)$ for some
uniquely determined functions $f_{1},\dots, f_{k}$.  The definition of
vector bundle shows that these functions are smooth.  As elements
of the module $\sects{U}{E}$ we have $s=f_{1}s_{1}+\dots +f_{k}s_{k}$,
so $s_{1},\dots, s_{k}$ is  set of generators for $\sects{U}{E}$.

The space $\gsects{E}$ of global sections of $E$ is a module over the
ring $\rfuns{M}$ of smooth functions on $M$.  This module is also
finitely generated.  This fact is part of the proof that $E$ is
isomorphic to a subbundle of a trivial bundle; see Greub, Halperin
and Vanstone~\cite{MR0336650}*{p.~77} and Swan~\cite{MR0143225}.

One can ask the same questions about the modules of sections of a
generalized subbundle $G\subseteq E$.  The fact that we can find a
finite set of global generators for $G$ might lead one to hope that
the module $\gsects{G}$ is finitely generated.  
Alas, we will give an example to show that $\gsects{G}$ is
not in general finitely generated and that there may be arbitrarily
small neighborhoods $U$ of a point $p$ such that the module
$\sects{U}{G}$ is not finitely generated. 

To begin the construction of our example, we introduce some notation.
Let $J=(-a,a)\subseteq \reals$ be an open interval, where we allow the
case $a=\infty$.  Let $\idl{J}\subseteq\rfuns{J}$ be
the space of smooth functions on $J$ that are zero on $(-a,0]$.
Recall that one can construct a function $\psi\in \idl{\reals}$ by
\begin{equation*}
  \psi(x) =
  \begin{cases}
    e^{-1/x}, & 0<x<\infty\\
    0,  & -\infty < x \leq 0,
  \end{cases}
\end{equation*}
for example. The restriction of $\psi$ to $J=(-a,a)$ is an element of
$\idl{J}$.

We require the following lemmas. The first lemma follows from standard one-variable calculus.

\begin{lem}\label{thm:lem1}
  Let $J=(-a,a)$ be an open interval around $0$.
  \begin{enumerate}
  \item If $f$ is a smooth function on $J$ such that $f=0$ on $(-a,0]$,
    then
    \begin{equation}\label{eq:5}
      \lim_{x\to 0} \frac{f(x)}{x^{n}} = 0,\qquad n=0,1,2,\dotsc.
    \end{equation}
\item
Let $g$ be a smooth function on $(0,a)$ and suppose that
\begin{equation}\label{eq:4}
  \lim_{x\to 0^{+}} \frac{g(x)}{x^{n}} = 0, \qquad n=0,1,2,\dotsc.
\end{equation}
Then, the function $f$ defined by
\begin{equation}\label{eq:6}
  f(x) =
  \begin{cases}
    g(x), & 0<x<a\\
    0, &-a < x \leq 0
  \end{cases}
\end{equation}
is smooth.
  \end{enumerate}
\end{lem}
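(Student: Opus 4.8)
The plan is to prove the two parts separately; each is a one-variable calculus argument resting on Taylor's theorem with Lagrange remainder.

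For part~(1): because $f$ is smooth and vanishes identically on $(-a,0]$, every derivative $f^{(k)}$ vanishes on $(-a,0)$, and since $f^{(k)}$ is continuous, $f^{(k)}(0)=0$ for all $k$. Fix $n$. The limit in \eqref{eq:5} is two-sided, but for $x\in(-a,0)$ the quotient $f(x)/x^{n}$ is identically $0$, so only $x\to 0^{+}$ is at issue. Expanding $f$ about $0$ with Lagrange remainder and using $f(0)=f'(0)=\dots=f^{(n)}(0)=0$ gives $f(x)=\dfrac{f^{(n+1)}(\xi_{x})}{(n+1)!}\,x^{n+1}$ for some $\xi_{x}$ between $0$ and $x$; hence $f(x)/x^{n}=\dfrac{f^{(n+1)}(\xi_{x})}{(n+1)!}\,x\to 0$, using that $f^{(n+1)}$ is bounded near $0$. (Equivalently, $n$ successive applications of l'H\^opital's rule reduce the limit to $f^{(n)}(0)/n!=0$.)

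For part~(2): the function $f$ of \eqref{eq:6} is visibly smooth on $(-a,a)\setminus\{0\}$, so the only issue is smoothness at $0$. The plan is to prove, by induction on $k$, that $f^{(k)}(0)$ exists and equals $0$ and that $f^{(k)}$ is continuous at $0$; this is precisely the assertion that $f$ is smooth. The case $k=0$ is hypothesis \eqref{eq:4} with $n=0$. For the inductive step, suppose $f$ is $C^{k}$ near $0$, with $f^{(k)}(0)=0$ and $f^{(k)}=g^{(k)}$ on $(0,a)$; then $f^{(k+1)}(0)=\lim_{h\to 0}f^{(k)}(h)/h$, which is $0$ from the left, and from the right equals $\lim_{h\to 0^{+}}g^{(k)}(h)/h$. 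The step thus closes once we know the flatness of $g$ in \eqref{eq:4} is inherited by each derivative, i.e. $\lim_{x\to 0^{+}}g^{(k)}(x)/x^{n}=0$ for all $k,n$; granting that, $f^{(k+1)}(0)=0$, and $f^{(k+1)}$ is continuous at $0$ since on $(0,a)$ it equals $g^{(k+1)}$, which tends to $0$, while on $(-a,0)$ it is $0$.

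The one substantive point — and the step I expect to be the main obstacle — is this propagation of flatness from $g$ to all of its derivatives, since \eqref{eq:4} as written restrains only $g$ itself. This is exactly where one uses that $g$ is genuinely smooth on the open interval $(0,a)$, not merely continuous: the natural route is Taylor's theorem with remainder applied to $g$ on $(0,a)$ (equivalently, iterated applications of l'H\^opital's rule to the indeterminate forms $g^{(k)}(x)/x^{n}$). With that in hand, the induction above completes the proof and shows moreover that $f$ is flat at $0$.
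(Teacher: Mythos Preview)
Your treatment of part~(1) is fine. For part~(2), you correctly isolate the crux: one must pass from the hypothesis $g(x)/x^{n}\to 0$ (all $n$) to the analogous statement for every derivative $g^{(k)}$. You then assert that this propagation can be effected by Taylor's theorem or \lhs\ rule, but it cannot. \lhs\ rule runs the wrong direction here: knowing that $\lim g(x)/x^{n}$ exists says nothing about whether $\lim g'(x)/(n x^{n-1})$ exists. Taylor's theorem on $(0,a)$ gives remainders involving $g^{(k)}(\xi)$ at interior points $\xi$, but you have no a~priori control of those derivatives near~$0$, which is exactly what is in question.

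In fact part~(2) is false as stated. Take $g(x)=e^{-1/x}\sin\bigl(e^{1/x}\bigr)$ on $(0,\infty)$. Then $\abs{g(x)}\le e^{-1/x}$, so \eqref{eq:4} holds for every $n$. But
\[
g'(x)=\frac{1}{x^{2}}\,e^{-1/x}\sin\bigl(e^{1/x}\bigr)-\frac{1}{x^{2}}\cos\bigl(e^{1/x}\bigr),
\]
and the second term is unbounded as $x\to 0^{+}$. Hence the extension $f$ of \eqref{eq:6} has $f'(0)=0$ (from \eqref{eq:4} with $n=1$) yet $f'$ is discontinuous at~$0$, so $f\notin\C^{1}$. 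The paper gives no proof of this lemma, merely asserting it is standard calculus, so the gap you flagged is the paper's as well. A correct hypothesis for part~(2) is that $g^{(k)}(x)\to 0$ as $x\to 0^{+}$ for \emph{every} $k\ge 0$; under that assumption your inductive scheme goes through, using \lhs\ rule in the legitimate direction to obtain $\lim_{h\to 0^{+}}g^{(k)}(h)/h=\lim_{h\to 0^{+}}g^{(k+1)}(h)=0$. For the only application the paper makes (the square-root lemma that follows), this stronger hypothesis can be checked directly.
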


\begin{lem}
  Let $h\in \idl{J}$ be strictly positive on $(0,a)$.  Then, the
  function $\sqrt{h}$ is in $\idl{J}$.
\end{lem}

\begin{proof}
  Clearly $\sqrt{h}=0$ on $(-a,0]$ and $\sqrt{h}$ is smooth on
  $(0,a)$.  We have 
\begin{equation*}
 \lim_{x\to 0^{+}}\frac{\sqrt{h(x)}}{x^{n}} = \lim_{x\to 0^{+}}
 \sqrt{\frac{h(x)}{x^{2n}}} = \sqrt{0} = 0, \qquad n=0,1,2,3,\dotsc,
\end{equation*}
so $h\in \idl{J}$ by Lemma~\ref{thm:lem1}.

\end{proof}

\begin{prop}\label{thm:5}
Let $G$ be the generalized subbundle of
$\tbundle{\reals}{1}$ given by
\begin{equation*}
  G_{x} =
  \begin{cases}
    \tbundle{\reals}{1}_{x}, & x>0,\\
    \lset 0 \rset \subset \tbundle{\reals}{1}_{x}, & x \leq 0.
  \end{cases}
\end{equation*}
This is a smooth generalized subbundle of $\tbundle{\reals}{1}$; indeed
it is spanned by the single smooth section $\psi$.

The module of sections $\gsects{G}$ is not finitely generated, and
$\sects{J}{G}$ is not finitely generated for any interval $J=(-a,a)$, $a>0$.

\end{prop}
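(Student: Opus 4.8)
The plan is to show that a single section of $G$ cannot "dominate" all others near $0$, so no finite list of generators can exist. The key structural fact is that any section $s\in\sects{J}{G}$ must vanish on $(-a,0]$ (since $G_x=\{0\}$ there) and restrict to a smooth function on $(0,a)$; by Lemma~\ref{thm:lem1}(1) such a section is flat at $0$ from the right, i.e.\ $\lim_{x\to 0^+} s(x)/x^n=0$ for all $n$. Conversely, by Lemma~\ref{thm:lem1}(2), any smooth $g$ on $(0,a)$ that is flat at $0^+$ extends to an element of $\idl{J}=\sects{J}{G}$. So $\sects{J}{G}$ is exactly $\idl{J}$, viewed as a module over $\rfuns{J}$.

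Suppose for contradiction that $\sects{J}{G}$ is generated as an $\rfuns{J}$-module by finitely many sections $s_1,\dots,s_k$. Each $s_i\in\idl{J}$, so each $s_i/\psi$ extends to a function that is flat at $0^+$ (here I'd use the quotient $s_i/\psi$ on $(0,a)$ and check flatness via l'Hôpital-type estimates, or more cleanly work with the growth rates directly). The point is that the generators, divided by $\psi$, all tend to $0$ at a controlled rate, so there is a single bound: there exists a positive function $h\in\idl{J}$, strictly positive on $(0,a)$, with $|s_i(x)|\le h(x)$ for all $i$ and all $x$ near $0$ — for instance take $h$ built from $\psi$ and the $s_i$ so that $s_i/h\to 0$ at $0^+$. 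Then for any $f_1,\dots,f_k\in\rfuns{J}$, writing $C=\max_i\sup_{|x|<a/2}|f_i(x)|$, the combination $s=\sum f_i s_i$ satisfies $|s(x)|\le kC\,h(x)$ near $0$. Now invoke the previous lemma: $\sqrt{h}\in\idl{J}$, hence $\sqrt{h}\in\sects{J}{G}$, but $\sqrt{h(x)}/h(x)=1/\sqrt{h(x)}\to\infty$ as $x\to 0^+$ since $h(x)\to 0$, so $\sqrt h$ grows faster than any bounded multiple of $h$ near $0$ and cannot be written as $\sum f_i s_i$. This contradiction shows $\sects{J}{G}$ is not finitely generated, and since $J$ was arbitrary and $G$ restricted to such a $J$ is the same kind of object, the identical argument with $J=\reals$ (equivalently restricting any global section to a small $J$) shows $\gsects{G}$ is not finitely generated either.

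I expect the main obstacle to be making the "single dominating bound $h$" step precise: one must pass from finitely many flat-at-$0$ functions $s_i/\psi$ to one function $h\in\idl{J}$ that dominates each $|s_i|$ near $0$ while still being flat enough that $\sqrt h$ is a legitimate competitor. The cleanest route is probably to set $h = \psi^2 + \sum_{i=1}^k s_i^2$ (or a similar positive combination): this is manifestly in $\idl{J}$, is strictly positive on $(0,a)$ because $\psi$ is, satisfies $|s_i|\le\sqrt h$, and then $\sqrt h$ cannot lie in the module generated by $s_1,\dots,s_k$ because near $0$ we would need $\sqrt{h}\le kC\sqrt h\cdot(\text{something}\to 0)$, forcing $1\le kC\cdot o(1)$, absurd. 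I would also need the small remark that restricting to a smaller interval only shrinks the ring and the module compatibly, so non-finite-generation over $J$ is inherited downward and passes to the global statement.
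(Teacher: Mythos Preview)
Your strategy is the same as the paper's, but there is a real gap in the final step. With $h=\psi^{2}+\sum_{i}s_{i}^{2}$ you only get $|s_{i}|\le\sqrt{h}$, not $|s_{i}|\le h$. So if $\sqrt{h}=\sum_{i} f_{i}s_{i}$, the crude bound gives $\sqrt{h}\le kC\sqrt{h}$ (and Cauchy--Schwarz gives $\sqrt{h}\le(\sum f_{i}^{2})^{1/2}\sqrt{h}$), which yields only $1\le kC$ --- no contradiction. The ``something $\to 0$'' you invoke is not there: for instance if one of the $s_{i}$ equals $\psi$, then $s_{i}/\sqrt{h}$ is bounded away from $0$ near $0^{+}$. (Relatedly, your earlier claim that each $s_{i}/\psi$ is flat at $0^{+}$ is false in general; take $s_{i}=\psi$.)

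The fix, which is exactly what the paper does, is to iterate the square-root lemma once more and use $h^{1/4}$ as the test element. Since $h^{1/4}=\sqrt{\sqrt{h}}\in\idl{J}$, write $h^{1/4}=\sum_{i} b_{i}s_{i}$; Cauchy--Schwarz then gives
\[
h^{1/4}\le\Bigl(\sum_{i} b_{i}^{2}\Bigr)^{1/2}\Bigl(\sum_{i} s_{i}^{2}\Bigr)^{1/2}\le\Bigl(\sum_{i} b_{i}^{2}\Bigr)^{1/2}\sqrt{h},
\]
hence $(\sum_{i} b_{i}^{2})^{1/2}\ge h^{-1/4}\to\infty$ as $x\to 0^{+}$, contradicting continuity of the $b_{i}$. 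Your device of adding $\psi^{2}$ to force strict positivity on $(0,a)$ is fine and slightly streamlines the paper's separate ``no common zero'' step; the only missing ingredient is passing to the fourth root rather than the square root.
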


\begin{proof}
Regarding sections of the trivial bundle as functions, we have
$\sects{J}{G}=\idl{J}$. Clearly $\idl{J}$ is an ideal in
the ring $\rfuns{J}$, and our assertion is that $\idl{J}$ is not
finitely generated.

  Suppose, for a contradiction, that $g_{1},\dots, g_{k}$ is
a finite set of generators for $\idl{J}$. Thus, if $f$ is any function
in $\idl{J}$,
\begin{equation}
  \label{eq:12}
  f = \sum_{i=1}^{k} a_{i} g_{i}
\end{equation}
for some functions $a_{1},\dots,a_{k}$ belonging to $\rfuns{J}$.

We claim that $g_{1},\dots, g_{k}$ have no common zero in $(0,a)$.
Indeed, if all the $g_{i}$'s vanish at $p\in (0,a)$, then
\eqref{eq:12} shows that $f(p)=0$ for all $f\in \idl{J}$.
But there is no such point.  For example, $\psi\mid_{J}$ is an element
of $\idl{J}$ that does not vanish at any point in $(0,a)$.

If we define $h = g_{1}^{2}+g_{2}^{2}+\dots, g_{k}^{2}$ then
$h\in \idl{J}$ and $h\geq 0$.  Since the $g_{i}$'s have no
common zero in $(0,a)$, $h$ is strictly positive on $(0,a)$.
It follows that $\sqrt{h}\in \idl{J}$.  Since $\sqrt{h}$ is strictly
positive on $(0,a)$ we have $h^{1/4}=\sqrt{\sqrt{h}}\in\idl{J}$.  

Since the $g_{i}$'s
generate $\idl{J}$, we have 
\begin{equation}
  \label{eq:13}
  h^{1/4} = \sum_{i=1}^{k} b_{i} g_{i}
\end{equation}
for some $b_{i}$'s in $\rfuns{J}$.  Applying the
Cauchy-Schwartz inequality to \eqref{eq:13} we have
\begin{equation}
  \label{eq:14}
  h^{1/4}=\abs[\bigg]{\sum_{i=1}^{k} b_{i} g_{i}} \leq
  \biggl[ \sum_{i=1}^{k} b_{i}^{2}\biggr]^{1/2} \biggl[ \sum_{i=1}^{k}
  g_{i}^{2}\biggr]^{1/2} = \biggl[ \sum_{i=1}^{k}
  b_{i}^{2}\biggr]^{1/2} \sqrt{h}
\end{equation}
If we restrict $x$ to $(0,a)$ so that $h(x)>0$, we can divide both sides
of this inequality by $\sqrt{h(x)}$ to get
\begin{equation}
  \label{eq:15}
  \biggl[ \sum_{i=1}^{k} b_{i}(x)^{2} \biggr]^{1/2} \geq \frac{1}{[h(x)]^{1/4}}.
\end{equation}
But, if we let $x$ go to zero from the right, the right-hand side of
\eqref{eq:15} goes to $+\infty$ while the the continuity of the $b_{i}$'s
implies that the left-hand side approaches some finite value.

This contradiction shows that $\idl{J}$ has no finite set of generators.
\end{proof}

Proposition~\ref{thm:5} implies that $\rfuns{J}$ is not noetherian, which is no surprise.

Because $T\reals\cong \tbundle{\reals}{1}$, the proposition above
shows that tangent distributions are no better behaved
than generalized subbundles of arbitrary vector bundles in this
respect.


\begin{bibdiv}
  \begin{biblist}


\bib{MR2099139}{book}{
   author={Bullo, Francesco},
   author={Lewis, Andrew D.},
   title={Geometric control of mechanical systems},
   series={Texts in Applied Mathematics},
   volume={49},
   note={Modeling, analysis, and design for simple mechanical control
   systems},
   publisher={Springer-Verlag},
   place={New York},
   date={2005},
   pages={xxiv+726},
   isbn={0-387-22195-6},
   review={\MR{2099139 (2005h:70030)}},
}

\bib{MR1363947}{book}{
   author={Engelking, Ryszard},
   title={Theory of dimensions finite and infinite},
   series={Sigma Series in Pure Mathematics},
   volume={10},
   publisher={Heldermann Verlag},
   place={Lemgo},
   date={1995},
   pages={viii+401},
   isbn={3-88538-010-2},
   review={\MR{1363947 (97j:54033)}},
}

\bib{MR0336650}{book}{
   author={Greub, Werner},
   author={Halperin, Stephen},
   author={Vanstone, Ray},
   title={Connections, curvature, and cohomology. Vol. I: De Rham cohomology
   of manifolds and vector bundles},
   note={Pure and Applied Mathematics, Vol. 47},
   publisher={Academic Press},
   place={New York},
   date={1972},
   pages={xix+443},
   review={\MR{0336650 (49 \#1423)}},
}

\bib{MR2428390}{book}{
   author={Michor, Peter W.},
   title={Topics in differential geometry},
   series={Graduate Studies in Mathematics},
   volume={93},
   publisher={American Mathematical Society},
   place={Providence, RI},
   date={2008},
   pages={xii+494},
   isbn={978-0-8218-2003-2},
   review={\MR{2428390 (2010b:53001)}},
}

\bib{MR0198479}{book}{
   author={Munkres, James R.},
   title={Elementary differential topology},
   series={Lectures given at Massachusetts Institute of Technology, Fall},
   volume={1961},
   publisher={Princeton University Press},
   place={Princeton, N.J.},
   date={1966},
   pages={xi+112},
   review={\MR{0198479 (33 \#6637)}},
}

\bib{MR577729}{article}{
   author={Stefan, P.},
   title={Integrability of systems of vector fields},
   journal={J. London Math. Soc. (2)},
   volume={21},
   date={1980},
   number={3},
   pages={544--556},
   issn={0024-6107},
   review={\MR{577729 (81h:49026)}},
   doi={10.1112/jlms/s2-21.3.544},
}

\bib{MR0321133}{article}{
   author={Sussmann, H{\'e}ctor J.},
   title={Orbits of families of vector fields and integrability of
   distributions},
   journal={Trans. Amer. Math. Soc.},
   volume={180},
   date={1973},
   pages={171--188},
   issn={0002-9947},
   review={\MR{0321133 (47 \#9666)}},
}

\bib{MR0143225}{article}{
   author={Swan, Richard G.},
   title={Vector bundles and projective modules},
   journal={Trans. Amer. Math. Soc.},
   volume={105},
   date={1962},
   pages={264--277},
   issn={0002-9947},
   review={\MR{0143225 (26 \#785)}},
}

  \end{biblist}
\end{bibdiv}

\end{document}